\providecommand{\U}[1]{\protect\rule{.1in}{.1in}}
\newtheorem{theorem}{Theorem}[section]
\numberwithin{equation}{section}
\begin{document}
\title[Macphail's Theorem revisited]{Macphail's Theorem revisited}
\author[D. Pellegrino]{Daniel Pellegrino}
\address{Departamento de Matem\'{a}tica \\
Universidade Federal da Para\'{\i}ba \\
58.051-900 - Jo\~{a}o Pessoa, Brazil.}
\email{daniel.pellegrino@academico.ufpb.br and dmpellegrino@gmail.com}
\author[J.. Silva]{Janiely Silva}
\email{janiely.silva@estudantes.ufpb.br}
\thanks{D. Pellegrino is supported by CNPq Grant 307327/2017-5 and Grant 2019/0014
Paraiba State Research Foundation (FAPESQ) and J. Silva is supported by CAPES}
\subjclass[2010]{ 15A60, 40A05}
\keywords{Macphail's Theorem; Dvoretzky--Rogers Theorem; matrices; series; Banach spaces}

\begin{abstract}
In 1947, M. S. Macphail constructed a series in $\ell_{1}$ that converges
unconditionally but does not converge absolutely. According to the literature,
this result helped Dvoretzky and Rogers to finally answer a long standing
problem of Banach Space Theory, by showing that in all infinite-dimensional
Banach spaces, there exists an unconditionally summable sequence that fails to
be absolutely summable. More precisely, the Dvoretzky--Rogers Theorem asserts
that in every infinite-dimensional Banach space $E$ there exists an
unconditionally convergent series ${\textstyle\sum}x^{(j)}$ such that
${\textstyle\sum}\Vert x^{(j)}\Vert^{^{2-\varepsilon}}=\infty$ for all
$\varepsilon>0.$ Their proof is non-constructive and Macphail%
's result for $E=\ell_{1}$ provides a constructive proof just for
$\varepsilon\geq1.$ In this note we revisit Machphail's paper and present two
alternative constructions that work for all $\varepsilon>0.$

\end{abstract}
\maketitle

\section{Introduction}

A series in a Banach space is said to be unconditionally convergent if all
rearrangements of that series converge (to the same vector). An old result due
to Dirichlet (1829) asserts that a series $%
{\textstyle\sum}
x^{(j)}$ of real or complex scalars is unconditionally convergent precisely
when it is absolutely convergent, i.e., when $%
{\textstyle\sum}
\Vert x^{(j)}\Vert$ converges. The same characterization holds for
finite-dimensional Banach spaces, see, for example, \cite[Theorem 1.3.5]{kk}.
In the infinite-dimensional framework it seems that unconditionally convergent
series were first investigated by Orlicz about 100 years after Dirichlet. The
subject attracted the attention of Banach, Mazur and others, who proposed the
following problem (see \cite[Problem 122]{mau} and also \cite{pie} for further
details): in an arbitrary infinite-dimensional Banach space, does there exist
an unconditionally convergent series that does not converge absolutely?

In most of the classical Banach spaces, examples of unconditionally convergent
series that do not converge absolutely were easily obtained. For instance, the
sequence $(j^{-1}e_{j})_{j=1}^{\infty}$ is unconditionally summable in
$\ell_{2}$ but it fails to be absolutely summable. The same example works for
all $\ell_{p}$ spaces for any $p>1,$ but it is useless for $\ell_{1}$. In
1947, Macphail (\cite{mac}) proved that the answer was also positive in
$\ell_{1}$. Macphail's approach has shown that $\ell_{1}$ is essentially the
critical case for all infinite-dimensional Banach spaces; this becomes rather
clear, for instance, in the approach described in \cite[\S 2 and Lemma 1]{rut}
and, according to \cite[pages 2 and 20]{diestel}, inspired Dvoretzky and
Rogers in 1950 (\cite{dr}) to transplant in a highly nontrivial fashion the
construction of Macphail to any infinite-dimensional Banach space and obtain a
definitive solution to the problem.

For a Banach space $E$ and $S$ a finite sequence of vectors $x_{1}%
,\ldots,x_{n}$ in $E$, Macphail defined
\[
G(S)=\frac{\sup_{\sigma}\left\Vert \sum_{i\in\sigma}x_{i}\right\Vert%
}{\sum_{i=1}^{n}\Vert x_{i}\Vert},
\]
where the supremum runs over all subset $\sigma\subseteq\{1,\ldots,n\}$, and
\[
\mu(E)=\inf G(S),
\]
where the infimum is considered over all finite sequences $S\subset E$. The
parameter $\mu(E)$ is now known as Macphail's constant of $E$. In his paper,
it is first observed that if $\mu(E)=0$, then unconditional convergence does
not imply absolute convergence in $E.$ The main result of \cite{mac} shows
that $\mu(\ell_{1})=0$.

Some alternative proofs of Macphail's result and of the Dvoretzky--Rogers
Theorem appeared later, but they were not constructive (see, for instance,
\cite[page 145]{Kvaratskhelia} and \cite{ara, israel} and the references
therein). We also refer the interested reader to \cite{bayart} for a
non-constructive approach in $\ell_{1}$ using the Kahane--Salem--Zygmund
inequality (see also \cite{mas} for details on the Kahane--Salem--Zygmund
inequality) and \cite{gordon, rut1} for the asymptotic behavior of $\mu
(\ell_{p}^{n})$.

The striking result of Dvoretzky--Rogers, showing that $\mu(E)=0$ for all
infinite-dimensional Banach spaces, attracted the attention of Grothendieck
who referred to the Dvoretzky--Rogers Theorem as the unique decisive result in
the finner metric theory of general Banach spaces known at that time (see
\cite[page 20]{diestel}). Additionally, the result of Dvoretzky and Rogers
answers much more than what is asked in the original problem of Banach's
school. In more precise terms, if $E$ is an infinite-dimensional Banach space,
the Dvoretzky--Rogers Theorem assures the existence of an unconditionally
convergent series $%
{\textstyle\sum}
x^{(j)}$ in $X$ such that%
\begin{equation}%
{\textstyle\sum\limits_{j=1}^{\infty}}
\left\Vert x^{(j)}\right\Vert ^{2-\varepsilon}=\infty\label{qqqq}%
\end{equation}
for all $\varepsilon>0.$ However, the proof of the Dvoretzky--Rogers Theorem
do not offer an explicit construction of such sequence. In this note we
revisit Macphail's paper and provide two alternative constructions of such a
sequence in $\ell_{p}$, for all $p\in\lbrack1,2]$ (the case $p>2$ is obvious
and for this reason we do not consider it); unlike Macphail's approach which
is valid for $\varepsilon\geq1,$ the construction presented here completely
satisfies the above statement, i.e., it provides an unconditionally convergent
series $%
{\textstyle\sum}
x^{(j)}$ in $\ell_{p}$ satisfying (\ref{qqqq}) for all $\varepsilon>0$.
Similar constructions may be known for experts in the field but we were not
able to find in the literature.

\section{The fisrt approach}

The first alternative construction is valid just for complex scalars and rests
in the properties of some special matrices that date back to the works of
Toeplitz \cite{toe}: 

\begin{theorem}
\label{casecomp} Let $p\in\lbrack1,2]$ and $\alpha>1$ a positive integer. For
all positive integers $j$, let
\[
A_{j}:=\left[  1/2+\sqrt{1/4+\log_{\alpha}\left(  \left(  j+1\right)
/2\right)  },1/2+\sqrt{1/4+\log_{\alpha}j}\right]  \cap\mathbb{N}.
\]
The sequence $(x^{(j)})_{j=1}^{\infty}$ defined by
\[
x^{(j)}={\alpha}^{{\frac{(1-k_{j})(k_{j}(2+p)+2p)}{2p}}}\cdot\sum
_{s=1}^{\alpha^{k_{j}(k_{j}-1)}}\left(  \exp\left(  {\left[  j\alpha
^{k_{j}\left(  1-k_{j}\right)  }+\alpha^{k_{j}\left(  1-k_{j}\right)
}-1\right]  2\pi si}\right)  \cdot e_{\alpha^{k_{j}(k_{j}-1)}+s-1}\right)
\]
if $A_{j}\neq\emptyset$, where $k_{j}$ is the unique element of $A_{j}$, and
$x^{(j)}=0$ otherwise, is unconditionally summable in $\ell_{p}$ (over the
complex scalar field) and
\[%
{\textstyle\sum\limits_{j=1}^{\infty}}
\left\Vert x^{(j)}\right\Vert ^{2-\varepsilon}=\infty
\]
for all $\varepsilon>0.$
\end{theorem}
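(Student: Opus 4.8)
The plan is to first decode the combinatorial bookkeeping hidden in the sets $A_j$. A direct computation shows that for a positive integer $k\ge 1$ one has $k\in A_j$ precisely when $\alpha^{k(k-1)}\le j\le 2\alpha^{k(k-1)}-1$; thus exactly $N_k:=\alpha^{k(k-1)}$ indices $j$ satisfy $k_j=k$, and for all of them $x^{(j)}$ is supported on the same block of coordinates $\{\alpha^{k(k-1)},\dots,2\alpha^{k(k-1)}-1\}$. Since $\alpha\ge 2$ forces $2N_k-1<N_{k+1}=\alpha^{k(k+1)}$, these blocks are pairwise disjoint. Writing $\alpha^{k(1-k)}=1/N_k$ and $r:=j+1-N_k\in\{1,\dots,N_k\}$, the bracketed exponent collapses and the nonzero entries of $x^{(j)}$ become $c_k\exp\!\left(2\pi i\,rs/N_k\right)$ for $s=1,\dots,N_k$, where $c_k:=\alpha^{(1-k)(k(2+p)+2p)/(2p)}$. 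In other words, the $N_k$ vectors living in block $k$ are the columns of the $N_k\times N_k$ discrete Fourier (character) matrix, scaled by $c_k$ — this is the Toeplitz-type structure announced before the statement.

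With this description the norm is immediate: each $x^{(j)}$ with $k_j=k$ has $N_k$ entries of modulus $|c_k|$, so $\|x^{(j)}\|_p^p=|c_k|^p N_k=\alpha^{-p(k-1)(k+2)/2}$, i.e. $\|x^{(j)}\|_p=\alpha^{-(k-1)(k+2)/2}$, independent of $j$ within the block. The divergence in \eqref{qqqq} then follows by grouping the series according to $k$:
\[
\sum_{j\ge 1}\|x^{(j)}\|^{2-\varepsilon}=\sum_{k\ge 1}N_k\,\alpha^{-(2-\varepsilon)(k-1)(k+2)/2}=\sum_{k\ge 1}\alpha^{(k-1)(\varepsilon(k+2)-4)/2}.
\]
For every fixed $\varepsilon>0$ the exponent $(k-1)(\varepsilon(k+2)-4)/2$ tends to $+\infty$, so the terms do not go to $0$ and the series diverges.

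The heart of the argument is unconditional summability, which I would establish through the Cauchy criterion: it suffices to show that $\sup\|\sum_{j\in F}\epsilon_j x^{(j)}\|_p\to 0$ as $\min F\to\infty$, the supremum taken over finite sets $F$ and scalars $|\epsilon_j|\le 1$. Because the blocks are disjoint, this $\ell_p$-norm splits as $\sum_k\|v_k\|_p^p$, where $v_k$ collects the block-$k$ terms. On block $k$ the coordinate $s$ of $v_k$ equals $c_k w_s$ with $w_s=\sum_{r}\epsilon_r\exp(2\pi i\,rs/N_k)$; orthogonality of the characters (Parseval for the DFT matrix) gives $\sum_{s}|w_s|^2=N_k\sum_r|\epsilon_r|^2\le N_k|F_k|\le N_k^2$, and the embedding $\|\cdot\|_p\le N_k^{1/p-1/2}\|\cdot\|_2$ (valid for $p\le 2$) then yields $\|v_k\|_p\le |c_k|N_k^{1/p}\cdot N_k^{1/2}=\alpha^{-(k-1)}$. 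Summing over the blocks present in $F$ gives $\|\sum_{j\in F}\epsilon_j x^{(j)}\|_p^p\le\sum_{k\ge k_0}\alpha^{-p(k-1)}$, a geometric tail that vanishes as $k_0\to\infty$; since $k_j\to\infty$ as $j\to\infty$, tails over large indices are uniformly small and unconditional convergence follows.

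The main obstacle is exactly the last estimate: controlling the supremum over all scalar choices $\epsilon_j$. The naive bound $|w_s|\le|F_k|$ is useless, and the whole design of the example — the oscillating exponentials together with the precise exponent defining $c_k$ — is what forces the worst case $|F_k|=N_k$ to still decay geometrically in $k$ after passing from $\ell_2$ to $\ell_p$. The remaining points I would verify carefully are the equivalence $k\in A_j\iff \alpha^{k(k-1)}\le j\le 2\alpha^{k(k-1)}-1$ (hence uniqueness of $k_j$ and $k_j\to\infty$) and the block-disjointness inequality $2N_k-1<N_{k+1}$.
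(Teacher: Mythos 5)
Your proposal is correct, and its skeleton (decoding of $A_j$, identification of the entries with the character matrix $\exp(2\pi i rs/N_k)$, the block structure, the norm $\Vert x^{(j)}\Vert_p=\alpha^{-(k-1)(k+2)/2}$, and the divergence computation) coincides with the paper's; all of your computations check out, including the key bound $\Vert v_k\Vert_p\le\alpha^{-(k-1)}$. The one genuine difference is how unconditionality is certified. You work on the primal side: you bound $\bigl\Vert\sum_{j\in F}\epsilon_j x^{(j)}\bigr\Vert_p$ directly, using Parseval for the DFT matrix together with the comparison $\Vert\cdot\Vert_p\le n^{1/p-1/2}\Vert\cdot\Vert_2$ for $p\le 2$; in operator language, you use $\Vert A\Vert_{\ell_\infty^n\to\ell_p^n}\le n^{\frac12+\frac1p}$. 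The paper works on the dual side: it shows $\sup_{\Vert\varphi\Vert_{p^\ast}\le 1}\sum_{j\ge n}\vert\varphi(x^{(j)})\vert\to 0$ using the Schur-test bound (\ref{rr}), i.e. $\Vert A\Vert_{\ell_{p^\ast}^n\to\ell_1^n}\le n^{\frac12+\frac1p}$, and then invokes the criterion of \cite[Theorem 1.5]{diestel}. Since the matrix is symmetric, these two estimates are adjoint formulations of the same inequality, and both proofs rest on the same two ingredients, orthogonality (\ref{i9x}) plus H\"older; so the difference is one of formulation rather than of substance. What your route buys is directness: no dual functionals or bilinear forms appear, the disjointness of the blocks makes the $p$-th power of the norm split exactly as $\sum_k\Vert v_k\Vert_p^p$, and the tail is a clean geometric series (moreover you get bounded-multiplier convergence explicitly). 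What the paper's route buys is the formally stronger intermediate statement that the tails of $\sum_j\vert\varphi(x^{(j)})\vert$ are small uniformly over the dual unit ball, from which the same criterion is read off. The two side conditions you flagged for verification (the equivalence $k\in A_j\iff\alpha^{k(k-1)}\le j\le 2\alpha^{k(k-1)}-1$ and the disjointness $2N_k-1<N_{k+1}$) are routine and both hold, so your argument is complete.
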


The statement may seem somewhat complicated at first glance, but the
construction is fairly simple. We shall first observe that the interval
\[
\left[  1/2+\sqrt{1/4+\log_{\alpha}\left(  \left(  j+1\right)  /2\right)
},\text{ }1/2+\sqrt{1/4+\log_{\alpha}j}\right]
\]
has size smaller than $1$ and thus $A_{j}$ is, in fact, either empty or formed
by a single integer. A straightforward computation shows that when
$j\in\left\{  \alpha^{k\left(  k-1\right)  },\alpha^{k\left(  k-1\right)
}+1,\ldots,2\alpha^{k\left(  k-1\right)  }-1\right\}  $ for a certain positive
integer $k$, we have $A_{j}=\{k\}$; otherwise $A_{j}$ is empty.

\begin{proof}
For each positive integer $n$, consider the $n\times n$ matrix $(a_{rs}%
^{(n)})$ with $a_{rs}^{(n)}=\exp(2\pi irs/n);$ it is obvious that
$|a_{rs}^{(n)}|=1$ and one can also check that
\begin{equation}
\sum_{s=1}^{n}a_{rs}^{(n)}\overline{a_{ts}^{(n)}}=n\delta_{rt},\label{i9x}%
\end{equation}
where $\delta_{rt}$ denotes the Kronecker delta. In order to prove (\ref{i9x})
note that
\[
\sum_{s=1}^{n}a_{rs}^{(n)}\overline{a_{ts}^{(n)}}=\sum_{s=1}^{n}\exp\left(
2\pi i\frac{rs}{n}\right)  \cdot\exp\left(  -2\pi i\frac{ts}{n}\right)
=\sum_{s=1}^{n}\exp\left(  2\pi i\frac{(r-t)s}{n}\right)  .
\]
If $r=t$, it is obvious that $\sum_{s=1}^{n}a_{rs}^{(n)}\overline{a_{ts}%
^{(n)}}=n$. On the other hand, if $r\neq t$, we have
\[
\sum_{s=1}^{n}\exp\left(  2\pi i\frac{(r-t)s}{n}\right)  =\frac{\exp\left(
2\pi i\frac{(r-t)}{n}\right)  -\exp\left(  2\pi i\frac{(r-t)(n+1)}{n}\right)
}{1-\exp\left(  2\pi i\frac{(r-t)}{n}\right)  }%
\]
and, recalling that $\exp(\pi i)=-1,$ a straightforward computation gives us
\[
\sum_{s=1}^{n}a_{rs}^{(n)}\overline{a_{ts}^{(n)}}=0.
\]
Let $p^{\ast}$ be the conjugate of $p$, i.e.,%
\[
\frac{1}{p}+\frac{1}{p^{\ast}}=1
\]
and let $\ell_{p}^{n}$ denote $\mathbb{C}^{n}$ with the $\ell_{p}$-norm. For
unit vectors $y^{(1)}\in\ell_{p^{\ast}}^{n}$ and $y^{(2)}\in\ell_{\infty}^{n}%
$, we have%
\begin{equation}
\left\vert \sum_{i_{1},i_{2}=1}^{n}a_{i_{1}i_{2}}^{(n)}y_{i_{1}}^{(1)}%
y_{i_{2}}^{(2)}\right\vert \leq n^{\frac{1}{2}+\frac{1}{p}}.\label{0987}%
\end{equation}
The proof of (\ref{0987}) follows by Schur's test (also known as Young's
inequality). For the sake of completeness we present a proof here following
the lines of \cite[pages 30 and 31]{djd}. By the H\"{o}lder inequality, we
have
\begin{align*}
\left\vert \sum_{i_{1},i_{2}=1}^{n}a_{i_{1}i_{2}}^{(n)}y_{i_{1}}^{(1)}%
y_{i_{2}}^{(2)}\right\vert  &  \leq\sum_{i_{2}=1}^{n}\left\vert \sum_{i_{1}%
=1}^{n}a_{i_{1}i_{2}}^{(n)}y_{i_{1}}^{(1)}\right\vert |y_{i_{2}}^{(2)}|\\
&  \leq\left(  \sum_{i_{2}=1}^{n}|y_{i_{2}}^{(2)}|^{2}\right)  ^{\frac{1}{2}%
}\left(  \sum_{i_{2}=1}^{n}\left\vert \sum_{i_{1}=1}^{n}a_{i_{1}i_{2}}%
^{(n)}y_{i_{1}}^{(1)}\right\vert ^{2}\right)  ^{\frac{1}{2}}\\
&  \leq n^{\frac{1}{2}}\left(  \sum_{i_{2}=1}^{n}\left\vert \sum_{i_{1}=1}%
^{n}a_{i_{1}i_{2}}^{(n)}y_{i_{1}}^{(1)}\right\vert ^{2}\right)  ^{\frac{1}{2}%
}.
\end{align*}
Since%
\[
\left(  \sum_{i_{2}=1}^{n}\left\vert \sum_{i_{1}=1}^{n}a_{i_{1}i_{2}}%
^{(n)}y_{i_{1}}^{(1)}\right\vert ^{2}\right)  ^{\frac{1}{2}}=\left(
\sum_{i_{2}=1}^{n}\sum_{\substack{i_{1}=1\\j_{1}=1}}^{n}y_{i_{1}}%
^{(1)}\overline{y_{j_{1}}^{(1)}}a_{i_{1}i_{2}}^{(n)}\overline{a_{j_{1}i_{2}%
}^{(n)}}\right)  ^{\frac{1}{2}}=\left(  \sum_{\substack{i_{1}=1\\j_{1}=1}%
}^{n}y_{i_{1}}^{(1)}\overline{y_{j_{1}}^{(1)}}\sum_{i_{2}=1}^{n}a_{i_{1}i_{2}%
}^{(n)}\overline{a_{j_{1}i_{2}}^{(n)}}\right)  ^{\frac{1}{2}},
\]
by (\ref{i9x}) and by the H\"{o}lder inequality, we have
\begin{align*}
\left\vert \sum_{i_{1},i_{2}=1}^{n}a_{i_{1}i_{2}}^{(n)}y_{i_{1}}^{(1)}%
y_{i_{2}}^{(2)}\right\vert  &  \leq n^{\frac{1}{2}}\left(  \sum
_{\substack{i_{1}=1\\j_{1}=1}}^{n}y_{i_{1}}^{(1)}\overline{y_{j_{1}}^{(1)}%
}n\delta_{i_{1}j_{1}}\right)  ^{\frac{1}{2}}\\
&  =n\left(  \sum_{i_{1}=1}^{n}|y_{i_{1}}^{(1)}|^{2}\right)  ^{\frac{1}{2}}\\
&  \leq n\left(  \sum_{i_{1}=1}^{n}1\right)  ^{\frac{1}{2}-\frac{1}{p^{\ast}}%
}\left(  \sum_{i_{1}=1}^{n}|y_{i_{1}}^{(1)}|^{p^{\ast}}\right)  ^{\frac
{1}{p^{\ast}}}\\
&  =n^{\frac{1}{2}+\frac{1}{p}}.
\end{align*}
Thus, by (\ref{0987}), we obtain
\begin{equation} \label{rr}
\sup_{\left\Vert \varphi\right\Vert _{p^{\ast}}\leq1}\sum\limits_{r=1}%
^{n}\left\vert \sum\limits_{s=1}^{n}\varphi_{s}a_{rs}^{(n)}\right\vert
=\sup_{\left\Vert \varphi\right\Vert _{p^{\ast}}\leq1}\sup_{\left\Vert
	\psi\right\Vert _{\infty}\leq1}\left\vert \sum\limits_{r,s=1}^{n}\psi
_{r}\varphi_{s}a_{rs}^{(n)}\right\vert \leq n^{\frac{1}{2}+\frac{1}{p}},
\end{equation}
where $\varphi=(\varphi_{1},\ldots,\varphi_{n})$ and $\psi=(\psi_{1}%
,\ldots,\psi_{n})$.

Let $j_{k}:=\alpha^{k(k-1)}$ for all positive integers $k$. Define the
sequence $(x^{(j)})_{j=1}^{\infty}$ as follows: if $j\in\{j_{k},\ldots
,2j_{k}-1\}$, for a certain positive integer $k$, consider
\[
x^{(j)}=j_{k}^{-\left(  \frac{1}{2}+\frac{1}{p}+\frac{1}{k}\right)  }\left(
\sum_{s=1}^{j_{k}}a_{rs}^{(j_{k})}e_{j_{k}+s-1}\right)  ,
\]
where $r=j-j_{k}+1$; otherwise take $x^{(j)}=0$. For all $\varphi\in(\ell
_{p})^{\ast}$ it is plain that
\begin{equation}\label{lim}
\sup_{\left\Vert \varphi\right\Vert _{p^{\ast}}\leq1}%
{\textstyle\sum\limits_{j=1}^{\infty}}
\left\vert \varphi\left(  x^{(j)}\right)  \right\vert \leq\sum_{k=1}^{\infty
}\left(  \sup_{\left\Vert \varphi\right\Vert _{p^{{\ast}}}\leq1}\sum_{j=j_{k}%
}^{2j_{k}-1}\left\vert \varphi\left(  x^{(j)}\right)  \right\vert \right)  .
\end{equation}

By the definition of $x^{(j)}:=(x_{m}^{(j)})_{m=1}^{\infty}$, we obtain
\begin{align}
\sum_{j=j_{k}}^{2j_{k}-1}\left\vert \varphi\left(  x^{(j)}\right)
\right\vert  &  =\sum_{j=j_{k}}^{2j_{k}-1}\left\vert \sum_{m=1}^{\infty
}\varphi_{m}x_{m}^{(j)}\right\vert \label{88}\\
&  =\sum_{j=j_{k}}^{2j_{k}-1}\left\vert j_{k}^{-\left(  \frac{1}{2}+\frac
{1}{p}+\frac{1}{k}\right)  }\sum_{m=j_{k}}^{2j_{k}-1}\varphi_{m}%
a_{[j-j_{k}+1][m-j_{k}+1]}^{(j_{k})}\right\vert \nonumber
\end{align}
for all positive integers $k$. Note that we can assign new indices to the sums
$\sum_{j=j_{k}}^{2j_{k}-1}$ and $\sum_{m=j_{k}}^{2j_{k}-1}$ as follows:
\begin{align*}
\sum_{j=j_{k}}^{2j_{k}-1}\left\vert j_{k}^{-\left(  \frac{1}{2}+\frac{1}%
{p}+\frac{1}{k}\right)  }\sum_{m=j_{k}}^{2j_{k}-1}\varphi_{m}a_{[j-j_{k}%
+1][m-j_{k}+1]}^{(j_{k})}\right\vert  &  =\sum_{r=1}^{j_{k}}\left\vert
j_{k}^{-\left(  \frac{1}{2}+\frac{1}{p}+\frac{1}{k}\right)  }\sum_{s=1}%
^{j_{k}}\varphi_{(j_{k}+s-1)}a_{rs}^{(j_{k})}\right\vert \\
&  =j_{k}^{-\left(  \frac{1}{2}+\frac{1}{p}+\frac{1}{k}\right)  }\sum
_{r=1}^{j_{k}}\left\vert \sum_{s=1}^{j_{k}}\varphi_{(j_{k}+s-1)}a_{rs}%
^{(j_{k})}\right\vert .
\end{align*}
Hence, for $n\geq2$, by (\ref{88}) and (\ref{rr}), we have
\begin{align}
\sum_{k=n}^{\infty}\left(  \sup_{\left\Vert \varphi\right\Vert _{p^{\ast}}%
\leq1}\sum_{j=j_{k}}^{2j_{k}-1}\left\vert \varphi\left(  x^{(j)}\right)
\right\vert \right)   &  =\sum_{k=n}^{\infty}\left(  \sup_{\left\Vert
\varphi\right\Vert _{p^{\ast}}\leq1}\sum_{j=j_{k}}^{2j_{k}-1}\left\vert
j_{k}^{-\left(  \frac{1}{2}+\frac{1}{p}+\frac{1}{k}\right)  }\sum_{m=j_{k}%
}^{2j_{k}-1}\varphi_{m}a_{[j-j_{k}+1][m-j_{k}+1]}^{(j_{k})}\right\vert
\right)  \label{aac}\\
&  =\sum_{k=n}^{\infty}\left(  \sup_{\left\Vert \varphi\right\Vert _{p^{\ast}%
}\leq1}j_{k}^{-\left(  \frac{1}{2}+\frac{1}{p}+\frac{1}{k}\right)  }\sum
_{r=1}^{j_{k}}\left\vert \sum_{s=1}^{j_{k}}\varphi_{(j_{k}+s-1)}a_{rs}%
^{(j_{k})}\right\vert \right)  \nonumber\\
&  =\sum_{k=n}^{\infty}\left(  j_{k}^{-\left(  \frac{1}{2}+\frac{1}{p}%
+\frac{1}{k}\right)  }\sup_{\left\Vert \varphi\right\Vert _{p^{\ast}}\leq
1}\sum_{r=1}^{j_{k}}\left\vert \sum_{s=1}^{j_{k}}\varphi_{(j_{k}+s-1)}%
a_{rs}^{(j_{k})}\right\vert \right)  \nonumber\\
&  \leq\sum_{k=n}^{\infty}\left(  j_{k}^{-\left(  \frac{1}{2}+\frac{1}%
{p}+\frac{1}{k}\right)  }j_{k}^{\frac{1}{2}+\frac{1}{p}}\right)  \nonumber\\
&  =\sum_{k=n}^{\infty}\alpha^{1-k}<\infty.\nonumber
\end{align}
A well-known necessary and sufficient condition for a series $\sum
_{n=1}^{\infty}y_{n}$ to be unconditionally summable (see \cite[Theorem
1.5]{diestel}) is that, given $\delta>0$, there is $n_{\delta}\in\mathbb{N}$
such that
\[
\left\Vert \sum_{n\in M}y_{n}\right\Vert <\delta
\]
whenever $M$ is a finite subset of $\mathbb{N}$ with $\min M>n_{\delta}$.

By (\ref{lim}) and (\ref{aac}) we have
\[
\lim_{n\rightarrow\infty}\sup_{\left\Vert \varphi\right\Vert _{p^{\ast}}\leq1}%
{\textstyle\sum\limits_{j=n}^{\infty}}
\left\vert \varphi\left(  x^{(j)}\right)  \right\vert =0.
\]
Hence, given $\delta>0$, there is $n_{\delta}\in\mathbb{N}$ such that $n\geq
n_{\delta}$ implies
\[
\sup_{\left\Vert \varphi\right\Vert _{p^{\ast}}\leq1}%
{\textstyle\sum\limits_{j=n}^{\infty}}
\left\vert \varphi\left(  x^{(j)}\right)  \right\vert <\delta.
\]
Then, for all finite sets $M\subset\mathbb{N}$ such that $M\subset\{n_{\delta
},n_{\delta}+1,\ldots\}$, it follows that
\[
\left\Vert \sum_{j\in M}x^{(j)}\right\Vert \leq\sup_{\left\Vert \varphi
\right\Vert _{p^{\ast}}\leq1}\sum_{j\in M}|\varphi(x^{(j)})|\leq
\sup_{\left\Vert \varphi\right\Vert _{p^{\ast}}\leq1}\sum_{j=n}^{\infty
}|\varphi(x^{(j)})|<\delta.
\]
Therefore, $(x^{(j)})_{j=1}^{\infty}$ is unconditionally summable.

On the other hand, when $j\in\{j_{k},\ldots,2j_{k}-1\}$, for a certain
positive integer $k$, we have
\[
\left\Vert x^{(j)}\right\Vert =\left(  j_{k}\cdot\left(  j_{k}^{-\left(
\frac{1}{2}+\frac{1}{p}+\frac{1}{k}\right)  }\right)  ^{p}\right)  ^{\frac
{1}{p}}=j_{k}^{-\left(  \frac{1}{2}+\frac{1}{k}\right)  };
\]
otherwise $\Vert x^{(j)}\Vert=0$. Then, we have
\[%
{\textstyle\sum}
\left\Vert x^{(j)}\right\Vert ^{r}=\sum_{k=1}^{\infty}j_{k}\cdot\left(
j_{k}^{-\left(  \frac{1}{2}+\frac{1}{k}\right)  }\right)  ^{r}=\sum
_{k=1}^{\infty}j_{k}^{1-\frac{r}{2}-\frac{r}{k}}=\sum_{k=1}^{\infty}%
\alpha^{k(k-1)\left(  1-\frac{r}{2}-\frac{r}{k}\right)  },
\]
that diverges for all $r<2$, because $\lim_{k\rightarrow\infty}\alpha
^{k(k-1)\left(  1-\frac{r}{2}-\frac{r}{k}\right)  }\neq0$ whenever $r<2$.

Note that, considering, for all positive integers $j$,
\[
A_{j}:=\left[  1/2+\sqrt{1/4+\log_{\alpha}\left(  \left(  j+1\right)
/2\right)  },1/2+\sqrt{1/4+\log_{\alpha}j}\right]  \cap\mathbb{N},
\]
each $x^{(j)}$ is precisely
\[
x^{(j)}={\alpha}^{{\frac{(1-k_{j})(k_{j}(2+p)+2p)}{2p}}}\cdot\sum
_{s=1}^{\alpha^{k_{j}(k_{j}-1)}}\left(  \exp\left(  {\left[  j\alpha
^{k_{j}\left(  1-k_{j}\right)  }+\alpha^{k_{j}\left(  1-k_{j}\right)
}-1\right]  2\pi si}\right)  \cdot e_{\alpha^{k_{j}(k_{j}-1)}+s-1}\right)
\]
when $A_{j}\neq\emptyset$, where $k_{j}$ is the unique element of $A_{j}$, and
$x^{(j)}=0$ otherwise. This concludes the proof.
\end{proof}

\section{The second approach}

In this section, following the lines of the proof of Theorem \ref{casecomp},
we borrow an argument credited to Pelczynski to adapt the previous
construction to the real scalar field.

The core of the proof rests on properties of the Walsh system (see \cite[page
8]{wal}), which is the set formed by the following functions:%

\begin{align*}
f_{0}(x)  &  =1,~~0\leq x\leq1,\\
f_{1}(x)  &  =%
\begin{cases}
1, & 0\leq x<\frac{1}{2},\\
-1, & \frac{1}{2}<x\leq1,
\end{cases}
\\
f_{2}^{(1)}(x)  &  =%
\begin{cases}
1, & 0\leq x<\frac{1}{4},~~\frac{3}{4}<x\leq1\\
-1, & \frac{1}{4}<x<\frac{3}{4},
\end{cases}
\\
f_{2}^{(2)}(x)  &  =%
\begin{cases}
1, & 0\leq x<\frac{1}{4},~~\frac{1}{2}<x<\frac{3}{4},\\
-1, & \frac{1}{4}<x<\frac{1}{2},~~\frac{3}{4}<x\leq1,
\end{cases}
\\
&  \vdots\\
f_{n+1}^{(2k-1)}(x)  &  =%
\begin{cases}
f_{n}^{(k)}(2x), & 0\leq x<\frac{1}{2},\\
(-1)^{k+1}f_{n}^{(k)}(2x-1), & \frac{1}{2}<x\leq1,
\end{cases}
\\
f_{n+1}^{(2k)}(x)  &  =%
\begin{cases}
f_{n}^{(k)}(2x), & 0\leq x<\frac{1}{2}\\
(-1)^{k}f_{n}^{(k)}(2x-1), & \frac{1}{2}<x\leq1,
\end{cases}
\end{align*}
where $n\in\mathbb{N}$ and $k=1,2,\ldots,2^{n-1}$.

\begin{theorem}
Let $m$ be a positive integer and $g_{1},\ldots,g_{2^{m}}$ the first
$n:=2^{m}$ functions of the Walsh system. For $i,j=1,\ldots,2^{m}$, let
$a_{i,j}^{(n)}$ the value that each $g_{j}$ takes in the interval
$((i-1)/2^{m},i/2^{m})$. Let $p\in\lbrack1,2]$. For all positive integers $j$,
let
\[
A_{j}:=\left[  1/2+\sqrt{1/4+\log_{2}\left(  \left(  j+1\right)  /2\right)
},1/2+\sqrt{1/4+\log_{2}j}\right]  \cap\mathbb{N}.
\]
The sequence $(x^{(j)})_{j=1}^{\infty}$ defined by
\[
x^{(j)}={2}^{{\frac{( 1-k_{j})( k_{j}(2+p)+2p) }{2p}}}\cdot\sum_{s=1}%
^{2^{k_{j}( k_{j}-1) }}\left(  a_{rs} ^{(2^{k_{j}(k_{j}-1)})} \cdot
e_{2^{k_{j}( k_{j}-1) }+s-1}\right)
\]
if $A_{j}\neq\emptyset$, where $r=j-2^{k_{j}(k_{j}-1)}+1$ and $k_{j}$ is the
unique element of $A_{j}$, and $x^{(j)}=0$ otherwise, is unconditionally
summable in $\ell_{p}$ and
\[%
{\textstyle\sum\limits_{j=1}^{\infty}}
\left\Vert x^{(j)}\right\Vert ^{2-\varepsilon}=\infty
\]
for all $\varepsilon>0.$
\end{theorem}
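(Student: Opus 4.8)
The plan is to mirror the proof of Theorem~\ref{casecomp} line by line, replacing the complex Toeplitz matrices $(a_{rs}^{(n)})$ by the Walsh matrices $(a_{i,j}^{(n)})$ built from the first $n=2^m$ Walsh functions. The whole argument hinges on two properties of the matrices used: first, that $|a_{rs}^{(n)}|=1$, which is immediate here since each Walsh function takes values in $\{-1,+1\}$; and second, the orthogonality relation analogous to $(\ref{i9x})$, namely
\[
\sum_{s=1}^{n}a_{rs}^{(n)}a_{ts}^{(n)}=n\,\delta_{rt}.
\]
This is precisely the statement that the Walsh functions $g_1,\ldots,g_{2^m}$ are orthonormal (after the normalization $\int_0^1 g_j g_t = \delta_{jt}$) and that the row vectors of the $n\times n$ sign matrix are pairwise orthogonal of length $n$. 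I would establish this orthogonality first, either directly from the recursive definition of the $f_n^{(k)}$ by induction on $m$, or by invoking the standard fact that the Walsh system is a complete orthonormal system in $L^2[0,1]$ together with the observation that each $g_j$ is constant on each dyadic interval $((i-1)/2^m,i/2^m)$, so the continuous integral reduces to the finite sum $\tfrac1n\sum_s a_{rs}^{(n)}a_{ts}^{(n)}$.

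Once orthogonality is in hand, I would derive the Schur/Young-type bound
\[
\sup_{\left\Vert \varphi\right\Vert _{p^{\ast}}\leq1}\sum_{r=1}^{n}\left\vert \sum_{s=1}^{n}\varphi_{s}a_{rs}^{(n)}\right\vert \leq n^{\frac12+\frac1p},
\]
exactly as in $(\ref{rr})$. The derivation is verbatim the Hölder-inequality computation from the first proof: the only inputs are $|a_{rs}^{(n)}|\le 1$ and the orthogonality relation, both now available, and the fact that everything is real changes nothing in the estimates (conjugates are harmless). This is where the argument credited to Pelczynski enters, since passing from complex roots of unity to a real orthogonal sign system is precisely the device that makes the real-scalar case work while preserving the key norm bound.

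With the estimate $(\ref{rr})$ replicated, the remainder of the proof is identical to that of Theorem~\ref{casecomp}. Setting $j_k:=2^{k(k-1)}$, the blocks $j\in\{j_k,\ldots,2j_k-1\}$ carry the vectors $x^{(j)}=j_k^{-(\frac12+\frac1p+\frac1k)}\sum_{s=1}^{j_k}a_{rs}^{(j_k)}e_{j_k+s-1}$ with $r=j-j_k+1$, and one checks that this coincides with the closed form given in the statement after substituting $\alpha=2$ and $k_j$ for $k$. The unconditional summability then follows from the tail estimate
\[
\sum_{k=n}^{\infty}\Bigl(\sup_{\Vert\varphi\Vert_{p^\ast}\le1}\sum_{j=j_k}^{2j_k-1}|\varphi(x^{(j)})|\Bigr)\le\sum_{k=n}^{\infty}j_k^{-(\frac12+\frac1p+\frac1k)}j_k^{\frac12+\frac1p}=\sum_{k=n}^{\infty}2^{1-k}<\infty,
\]
combined with the Cauchy criterion for unconditional convergence (\cite[Theorem 1.5]{diestel}). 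Finally, the norm computation $\Vert x^{(j)}\Vert=j_k^{-(\frac12+\frac1k)}$ on each block gives $\sum_j\Vert x^{(j)}\Vert^{r}=\sum_k 2^{k(k-1)(1-\frac r2-\frac rk)}$, whose general term fails to tend to $0$ for every $r<2$, so the series diverges for all $r=2-\varepsilon$. The only genuinely new step, and hence the main obstacle, is verifying the orthogonality relation for the Walsh matrices; everything downstream is a transcription of the first proof with $\alpha=2$.
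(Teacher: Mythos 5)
Your proposal is correct, and its skeleton --- establish the key bound (\ref{rr}) for the Walsh matrices, then transcribe the proof of Theorem \ref{casecomp} with $\alpha=2$ --- is exactly the paper's. The one place you diverge is in how that bound is obtained: you make the discrete orthogonality of the $\pm1$ matrix your central lemma and then rerun the bilinear Schur-test computation of the first proof, whereas the paper never writes down a discrete orthogonality relation at all; it uses the identity $\sum_{i=1}^{n}\bigl\vert\sum_{j=1}^{n}\varphi_{j}a_{i,j}^{(n)}\bigr\vert=n\int_{0}^{1}\bigl\vert\sum_{j=1}^{n}\varphi_{j}g_{j}(t)\bigr\vert\,dt$ (each $g_{j}$ is constant on the dyadic intervals), bounds the $L^{1}$-norm by the $L^{2}$-norm, and then applies $\int_{0}^{1}g_{j}g_{k}\,dt=\delta_{jk}$ and H\"older. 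These are the discrete and continuous versions of the same computation, so both routes work and have comparable length; the paper's avoids justifying a matrix identity separately, while yours makes the first proof literally reusable for any scaled-orthogonal sign matrix. One bookkeeping correction: with the statement's indexing ($a_{i,j}^{(n)}$ is the value of $g_{j}$ on the $i$-th interval), what the $L^{2}$-orthonormality of the Walsh system gives directly is \emph{column} orthogonality $\sum_{i=1}^{n}a_{i,r}^{(n)}a_{i,t}^{(n)}=n\delta_{rt}$ (sum over the interval index), not the row relation $\sum_{s=1}^{n}a_{r,s}^{(n)}a_{t,s}^{(n)}=n\delta_{rt}$ you wrote, so your reduction of the integral to a finite sum has the indices transposed. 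The slip is harmless --- column orthogonality is in fact the relation your Schur computation needs (the $\ell_{p^{\ast}}$ vector $\varphi$ is paired with the function index, and the outer sum runs over intervals), and in any case a square matrix with orthogonal columns of norm $\sqrt{n}$ automatically has orthogonal rows of norm $\sqrt{n}$ --- but you should state the relation you actually use.
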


\begin{proof}
Recall that $g_{1}=f_{0}$, $g_{2}=f_{1}$, $g_{3}=f_{2}^{(1)}$, $g_{4}%
=f_{2}^{(2)}$, and so on. For unit vector $\varphi\in\ell_{p^{\ast}}^{n}$, we
have%
\[
\sum\limits_{i=1}^{n}\left\vert \sum\limits_{j=1}^{n}\varphi_{j}a_{i,j}%
^{(n)}\right\vert \leq n^{\frac{1}{2}+\frac{1}{p}}%
\]
where $\varphi=(\varphi_{1},\ldots,\varphi_{n})$. In fact, for each
$i=1,2,\ldots,n$, by the definition of $g_{j}$ in the interval $((i-1)/n,i/n)$
we obtain
\[
\int_{(i-1)/n}^{i/n}\left\vert \sum_{j=1}^{n}\varphi_{j}g_{j}(t)\right\vert
dt=\frac{1}{n}\left\vert \sum\limits_{j=1}^{n}\varphi_{j}a_{i,j}%
^{(n)}\right\vert ,
\]
i.e.,
\begin{equation}
\left\vert \sum\limits_{j=1}^{n}\varphi_{j}a_{i,j}^{(n)}\right\vert
=n\int_{(i-1)/n}^{i/n}\left\vert \sum_{j=1}^{n}\varphi_{j}g_{j}(t)\right\vert
dt, \label{i1}%
\end{equation}
Hence, by (\ref{i1})
\begin{equation}
\sum\limits_{i=1}^{n}\left\vert \sum\limits_{j=1}^{n}\varphi_{j}a_{i,j}%
^{(n)}\right\vert =n\sum_{i=1}^{n}\int_{(i-1)/n}^{i/n}\left\vert \sum
_{j=1}^{n}\varphi_{j}g_{j}(t)\right\vert dt=n\int_{0}^{1}\left\vert \sum
_{j=1}^{n}\varphi_{j}g_{j}(t)\right\vert dt. \label{i2}%
\end{equation}
By (\ref{i2}) and by the monotonicity of the $L_{p}$-norms, we have
\begin{equation}
\sum\limits_{i=1}^{n}\left\vert \sum\limits_{j=1}^{n}\varphi_{j}a_{i,j}%
^{(n)}\right\vert =n\int_{0}^{1}\left\vert \sum_{j=1}^{n}\varphi_{j}%
g_{j}(t)\right\vert dt\leq n\left(  \int_{0}^{1}\left\vert \sum_{j=1}%
^{n}\varphi_{j}g_{j}(t)\right\vert ^{2}dt\right)  ^{\frac{1}{2}}. \label{i3}%
\end{equation}
Since $\int_{0}^{1}g_{j}(t)g_{k}(t)dt=\delta_{jk}$, for $j,k=1,\ldots,n$, we
have
\begin{equation}
\int_{0}^{1}\left\vert \sum_{j=1}^{n}\varphi_{j}g_{j}(t)\right\vert
^{2}dt=\sum_{j=1}^{n}|\varphi_{j}|^{2}. \label{i4}%
\end{equation}
Then, by (\ref{i3}) and (\ref{i4}), we have
\begin{equation}
\sum\limits_{i=1}^{n}\left\vert \sum\limits_{j=1}^{n}\varphi_{j}a_{i,j}%
^{(n)}\right\vert \leq n\left(  \sum_{j=1}^{n}|\varphi_{j}|^{2}\right)
^{\frac{1}{2}}. \label{i5}%
\end{equation}
On the other hand, by the H\"{o}lder inequality, using that $\left\Vert
\varphi\right\Vert _{p^{\ast}}\leq1$, we obtain
\begin{equation}
\left(  \sum_{j=1}^{n}|\varphi_{j}|^{2}\right)  ^{\frac{1}{2}}\leq\left(
\sum_{j=1}^{n}1\right)  ^{\frac{1}{2}-\frac{1}{p^{\ast}}}\cdot\left(
\sum_{j=1}^{n}|\varphi_{j}|^{p^{\ast}}\right)  ^{\frac{1}{p^{\ast}}}\leq
{n}^{\left(  \frac{1}{2}-\frac{1}{p^{\ast}}\right)  }. \label{i6}%
\end{equation}
Therefore, by (\ref{i5}) and (\ref{i6}), we have
\[
\sup_{\left\Vert \varphi\right\Vert _{p^{\ast}}\leq1}\sum\limits_{i=1}%
^{n}\left\vert \sum\limits_{j=1}^{n}\varphi_{j}a_{i,j}^{(n)}\right\vert \leq
n\left(  \sum_{j=1}^{n}|\varphi_{j}|^{2}\right)  ^{\frac{1}{2}}\leq n\cdot
{n}^{\left(  \frac{1}{2}-\frac{1}{p^{\ast}}\right)  }=n^{\frac{1}{2}+\frac
{1}{p}}.
\]
The rest of the proof follows the lines of the proof of Theorem \ref{casecomp}%
. It suffices to consider $\alpha=2$ in the definition of $j_{k}$ and change
the elements of the complex matrix by the elements of the Walsh system.
\end{proof}


\begin{thebibliography}{99}                                                                                               %


\bibitem {ara}G. Ara\'{u}jo, J. Santos, On the Maurey--Pisier and
Dvoretzky--Rogers Theorems, Bull. Braz. Math. Soc. 51 (2020), 1--9.

\bibitem {bayart}F. Bayart, D. Pellegrino, P. Rueda, On coincidence results
for summing multilinear operators: interpolation, $\ell_{1}$-spaces and
cotype. Collect. Math. 71 (2020), no. 2, 301--318.

\bibitem {diestel}J. Diestel, H. Jarchow and A. Tonge, Absolutely summing
operators, Cambridge Stud. Adv. Math., 43 (1995).

\bibitem {dr}A. Dvoretzky, C. A. Rogers, Absolute and unconditional
convergence in normed linear spaces. Proc. Nat. Acad. Sci. U.S.A. 36 (1950), 192--197.

\bibitem {gordon}Y. Gordon, On the projection and Macphail constants of
$\ell_{p}^{n}$ spaces. Israel J. Math. 6 (1968), 295--302.

\bibitem {kk}M. I. Kadets, V. M. Kadets, Series in Banach spaces. Conditional
and unconditional convergence. Translated from the Russian by Andrei Iacob.
Operator Theory: Advances and Applications, 94. Birkhauser Verlag, Basel,
1997. viii+156 pp. ISBN: 3-7643-5401-1

\bibitem {Kvaratskhelia}V.V. Kvaratskhelia, Unconditional convergence of
functional series in problems of probability theory. Translated from Sovrem.
Mat. Prilozh. No. 86 (2013). J. Math. Sci. (N.Y.) 200 (2014), no. 2, 143--294.

\bibitem {mac}M. S. Macphail, Absolute and unconditional convergence. Bull.
Amer. Math. Soc. 53 (1947), 121--123.

\bibitem {mas}M. Masty\l o, R. Szwedek, Kahane-Salem-Zygmund polynomial
inequalities via Rademacher processes. J. Funct. Anal. 272 (2017), no. 11, 4483--4512.

\bibitem {mau}D. Mauldin (ed.),The Scottish Book, Birkhauser, Boston, 1981.

\bibitem {djd}D. Pellegrino, D. Serrano-Rodr\'iguez, J. Silva, On unimodular
multilinear forms with small norms on sequence spaces, Lin. Algebra Appl.
\textbf{595} (2020), 24--32.

\bibitem {pie}A. Pietsch, History of Banach spaces and linear operators.
Birkh\"{a}user Boston, Inc., Boston, MA, 2007.

\bibitem {rut}D. Rutovitz, Absolute and unconditional convergence in normed
linear spaces. Proc. Cambridge Philos. Soc. 58 (1962), 575--579.

\bibitem {rut1}D. Rutovitz, Some parameters associated with finite-dimensional
Banach spaces. J. London Math. Soc. 40 (1965), 241--255.

\bibitem {israel}I. Singer, A proof of the Dvoretzky-Rogers theorem. Israel J.
Math. (1964), no 2, 249--250.

\bibitem {toe}O. Toeplitz, Uber eine bei den Dirichletschen Reihen auftretende
Aufgabe aus der Theorie der Potenzreihen vonunendlichvielen
Ver\"{a}nderlichen, Nachrichten von der K\"{o}niglichen Gesellschaft der
Wissenschaften zu G\"{o}ottingen, 417--432 (1913).

\bibitem {wal}J.L. Walsh, A closed set of normal orthogonal functions. Amer.
J. Math. 45 (1923), no. 1, 5--24.
\end{thebibliography}
\end{document}